\documentclass{amsart}
\usepackage{amsmath, amssymb, latexsym}
\title{Optimal Betti numbers of forest ideals}
\author{Michael Goff}

\newtheorem{theorem}{Theorem}[section]

\newtheorem{conjecture}[theorem]{Conjecture}
\newtheorem{example}[theorem]{Example}

\newtheorem{question}[theorem]{Question}

\newcommand{\K}{\Gamma}
\newcommand{\field}{{\bf k}}
\newcommand{\codim}{\mbox{\upshape codim}\,}

\newcommand{\m}{\mu}

\newcommand{\lk}{\mbox{\upshape lk}\,}

\newcommand{\dist}{\mbox{\upshape dist}\,}

\newcommand{\LCM}{\mbox{\upshape lcm}\,}

\def\proof{\smallskip\noindent {\it Proof: \ }}

\def\endproof{\hfill$\square$\medskip}

\begin{document}

\begin{abstract}
We prove a tight lower bound on the Betti numbers of tree and forest ideals and a tight upper bound on certain graded Betti numbers of squarefree monomial ideals.
\end{abstract}

\date{August 31, 2008}

\maketitle

\section{Introduction}
In this paper we study bounds on Betti numbers of certain classes of edge ideals.  Several other papers, including \cite{LinSyz}, \cite{HVT}, and the survey paper \cite{EdgeIdealSurvey}, use combinatorial methods to describe the minimal free resolutions of edge ideals and bound their Betti numbers.  For example, Ferrers ideals, as described in \cite{Ferrers} and \cite{Ferrers2}, are conjectured in \cite{NR} and shown in \cite{nr2} to minimize Betti numbers among edge ideals of bipartite graphs.  Earlier papers construct bounds on Betti numbers in terms of the projective dimension \cite{BrunRomer} or the Hilbert function \cite{Bigatti}.  In general, the problem of characterizing the minimal free resolution of edges ideals is quite difficult and there are many open questions.  In particular, while constructing explicit (generally nonminimal) resolutions such as the Taylor resolution is effective in finding upper bounds on Betti numbers, there are no standard techniques for finding lower bounds.

We start by reviewing necessary background and introducing notation.  Throughout this paper $\field$ is an arbitrary field, and $S$ is the polynomial ring over $\field$ in variables $V = \{x_1,\ldots,x_n\}$ with the usual $\mathbb{Z}$-grading.  For a squarefree monomial ideal $I \subset S$, we consider the minimal free $\mathbb{Z}$-graded resolution:
\[ 0 \rightarrow
\bigoplus_{a \in \mathbb{Z}} S(-a)^{\beta_{l,a}} \rightarrow \ldots \rightarrow \bigoplus_{a \in \mathbb{Z}} 
S(-a)^{\beta_{0,a}} \rightarrow I \rightarrow 0. \]
In the above expression, $S(-a)$ denotes $S$ with grading shifted by $a$, and $l$ denotes the length of the resolution. In particular, $l \geq \codim(S/I)$.  The numbers $\beta_{i,a} = \beta_{i,a}(I)$ are called the the $\mathbb{Z}$-graded Betti numbers of $I$.  We also consider the ungraded Betti numbers $\beta_i = \beta_i(I) := \sum_{a \in \mathbb{Z}}\beta_{i,a}(I)$.

Squarefree monomial ideals are closely related to hypergraphs by the edge ideal construction.  A \textit{hypergraph} $G(V =\{x_1,\ldots,x_n\},E)$ is a vertex set $V$ and a set of edges $E \subset 2^V$ with the property that no edge is contained in another edge.  Note that we allow edges to have cardinality one, and we allow vertices that are not contained in an edge.  The \textit{degree} of $G$ is the maximum size of an edge.  A hypergraph is \textit{pure} if all its edges have the same cardinality.  The \textit{edge ideal} of $G$ is the ideal of $S$ given by $$I(G) := (x_{i_1}\ldots x_{i_r}: \{x_{i_1},\ldots,x_{i_r}\} \in E).$$  Since each squarefree monomial ideal $I$ has a unique set of minimal generators, there exists a unique hypergraph $G_I$ whose edge ideal is $I$.  Edge ideals were first introduced in \cite{EdgeIdeals}; results related to edge ideals can be found in \cite{Splittable}, \cite{Jacques}, \cite{Forests}, \cite{CharInd}, and \cite{EdgeIdeals}.

The outline of the paper is as follows.  We introduce notation and definitions in Section \ref{prelim}.  In Section \ref{ForestLB}, we prove a lower bound on the (ungraded) Betti numbers of hyperforest and hypertree ideals.  In Section \ref{UpperBounds}, we look at upper bounds on the graded Betti numbers of squarefree monomial ideals and prove a tight upper bound on $\beta_{2,3d-1}$ for degree $d$ ideals.

\section{Preliminaries}
\label{prelim}
One of our theorems is about hypertrees and hyperforests.  A \textit{hyperforest} is a hypergraph $G(V,E)$ with the property that the edges of $G$ can be enumerated $F_1,\ldots,F_{|E|}$ in such a way that for all $2 \leq i \leq |E|$, $|F_i-(F_1 \cup \ldots \cup F_{i-1})| \geq 1$.  If $G$ is pure and the edges of $G$ can be enumerated so that for all $2 \leq i \leq |E|$, $|F_i-(F_1 \cup \ldots \cup F_{i-1})| = 1$, then $G$ is called a \textit{hypertree}.  If $G$ is pure and has degree $2$, then hyperforests and hypertrees are ordinary graph-theoretic forests and trees.

We say that a hypergraph $G(V,E)$ is $k$-\textit{colorable} if there exists a function $\kappa:V \rightarrow [k]$, called a $k$-\textit{coloring}, such that no two vertices with the same $\kappa$-value belong to the same face.  All degree $d$ hyperforests are $d$-colorable.  Furthermore, all degree $d$ hypertrees have a unique $d$-coloring up to permutation of the colors.

We also use the notion of a simplicial complex.  A \textit{simplicial complex} $\K$ with the vertex set $V$ is a collection of subsets of $2^{V}$ called \textit{faces} such that $\K$ is closed under inclusion.  We do not insist that the singleton subsets of $V$ are faces.  With every simplicial complex $\K$ we associate its \textit{Stanley-Reisner ideal} $I_{\K} \subset S$ generated by non-faces of $\K$: $I_\K := (\prod_{x_i \in L}x_i: L \subset V, L \not\in \K)$ (see \cite{St96}).  Likewise, given a squarefree monomial ideal $I \subset S$, we denote by $\Gamma(I)$ the simplicial complex $\Gamma$ on $V$ whose Stanley-Reisner ideal is $I$.

If $W \subset V$, then the \textit{induced subcomplex of} $\K$ on $W$, denoted $\K[W]$, has vertex set $W$ and faces $\{F \in \K: F \subset W\}$.  If $v \in V$ and $\{v\}$ is a face in $\K$, then the \textit{link} of $v$, denoted $\lk_\K(v)$, is the simplicial complex that has vertex set $V-\{v\}$ and faces $\{G-\{v\}: v \in G \in \K\}$.  The \textit{antistar} of $v$ is $\K-v := \K[V-\{v\}]$.  Let $\tilde{\beta}_p(\K) := \dim_\field(\tilde{H}_p(\K; \field))$ be the dimension of the $p$-th reduced simplicial homology with coefficients in $\field$.  We make frequent use of Hochster's formula (see \cite[Theorem II.4.8]{St96}), which states that for $W \subset V$, $$\beta_{i,a}(I_\K) = \sum_{|W|=a}\tilde{\beta}_{i-|W|-2}(\K[W]).$$

One advantage of using simplicial complexes is that Mayer-Vietoris sequences, together with Hochster's formula, allow us to construct bounds on the Betti numbers of the corresponding squarefree ideal.

Simplicial complexes and hypergraphs can be related via the Stanley-Reisner ideal: with a simplicial complex $\K$, we associate a hypergraph $G_\K := G_{I_\K}$.  Likewise, with a hypergraph $G$, we associate the simplicial complex $\K(G) = \K(I(G))$.  Thus the edges of $G$ are the minimal nonfaces of $\K(G)$.  Also, $G_{\K(\tilde{G})} = \tilde{G}$ and $\K(G_{\tilde{\K}}) = \tilde{\K}$.

We can describe the operation of taking the link of a vertex on the level of hypergraphs.  If $v$ is a vertex of $G(V,E)$, then define $\lk_G(v) := G_{\lk_{\K(G)}(v)}$.  Equivalently, to construct $\lk_G(v)$, remove $v$ from $V$, and for all edges $F$ that contain $v$, replace $F$ by $F-\{v\}$; then delete any edges that become nonminimal under inclusion.  Similarly, we define the antistar $G-v$ on the level of hypergraphs by $G_{\K(G)-v}$.  We may construct $G-v$ by removing $v$ from $V$ and deleting all edges of $G$ that contain $v$.  We also define the \textit{induced hypergraph} on $W \subset V$ by $G[W]$; $G[W]$ has vertex set $W$ and edges $\{F: F \in E, F \subset W\}$.

We also use the Taylor resolution of a squarefree monomial ideal, which in general is not minimal.  Suppose $I$ is the edge ideal of the hypergraph $G(V,E)$ with $r$ edges.  For each $\{x_{j_1},\ldots,x_{j_t}\} = F_i \in E$, let $\m_i = x_{j_1} \ldots x_{j_t}$.  The Taylor resolution is a cellular resolution, in the sense of \cite{MillSt}, supported on the labeled simplex with $r$ vertices, labeled $\m_j$, $1 \leq j \leq r$.  For more information on cellular resolutions, see Chapter 4 of \cite{MillSt}.  In particular, the $\mathbb{Z}$-graded Betti numbers of the Taylor resolution are 
\begin{equation}
\label{TaylorBetti}
\beta_{(i-1),j}^T(I) = |\{W \subset [r]: |W| = i, \deg \LCM_{k \in W} \m_k = j \}|, \quad 1 \leq i \leq r.
\end{equation}

\noindent Here and throughout the paper, $[r] := \{1,2,\ldots,r\}$.

\section{Betti numbers of forest ideals}
\label{ForestLB}

Our first main theorem establishes a lower bound on the Betti numbers of tree ideals.

\begin{theorem}
\label{TreeLB}
Let $G$ be a degree $d$ hypertree on $n$ vertices, and suppose for $1 \leq i \leq d$, there are $n_i$ vertices of color $i$.  Let $I$ be the edge ideal of $G$.  Then for $j \geq 2$, $$\beta_{j-1}(I) \geq \sum_{i=1}^d {n_i \choose j}.$$
\end{theorem}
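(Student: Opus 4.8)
The plan is to deduce the inequality from Hochster's formula by induction on the number of vertices, removing one vertex at a time in accordance with the hypertree enumeration. Write $\K = \K(G)$, so $I = I_\K$, and recall that Hochster's formula gives $\beta_{j-1}(I) = \sum_{W \subseteq V} \tilde{\beta}_{|W|-j-1}(\K[W])$. If $G$ has a single edge then $I$ is principal and both sides of the claimed inequality vanish for $j \geq 2$, so this is the base case. For the inductive step, fix a hypertree enumeration $F_1,\ldots,F_m$ of $E$ with $m \geq 2$, and let $v$ be the unique vertex of $F_m$ not contained in $F_1 \cup \cdots \cup F_{m-1}$, say of color $c$; then $v$ lies in no edge other than $F_m$, and $G - v$ is again a degree $d$ hypertree on $n-1$ vertices whose unique $d$-coloring is the restriction of that of $G$, so $G-v$ has $n_c - 1$ vertices of color $c$ and $n_i$ vertices of color $i$ for $i \neq c$. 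Applying the inductive hypothesis to $G - v$ and using $\binom{n_c}{j} = \binom{n_c-1}{j} + \binom{n_c-1}{j-1}$, it suffices to prove $\beta_{j-1}(I(G)) - \beta_{j-1}(I(G-v)) \geq \binom{n_c-1}{j-1}$.

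The next step is to identify this difference. For $W \not\ni v$ one has $\K[W] = \K(G-v)[W]$, so those terms cancel in Hochster's formula and $\beta_{j-1}(I(G)) - \beta_{j-1}(I(G-v)) = \sum_{W \ni v} \tilde{\beta}_{|W|-j-1}(\K[W])$. Now if $v \in W$ but $F_m \not\subseteq W$, then $v$ lies in no minimal nonface of $\K[W]$ (the only edge of $G$ through $v$ is $F_m$), so $v$ is a cone point of $\K[W]$ and this term vanishes; hence the difference equals $\sum_{W \supseteq F_m} \tilde{\beta}_{|W|-j-1}(\K[W])$. Equivalently, splitting off $\mu_{F_m}$ — the only minimal generator of $I(G)$ divisible by the fresh variable $v$ — is a Betti splitting of $I(G)$ (cf.\ \cite{Splittable}), and the difference is $\beta_{j-2}$ of the ideal $\big(\LCM(\mu_{F_m},\mu_{F'}) : F' \in E(G-v)\big)$.

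It remains to bound $\sum_{W \supseteq F_m} \tilde{\beta}_{|W|-j-1}(\K[W])$ below by $\binom{n_c-1}{j-1}$, which is the heart of the matter. I would produce, for each $(j-1)$-subset $T$ of the color-$c$ vertices other than $v$, a set $W_T \supseteq F_m \cup T$ containing no color-$c$ vertex other than $v$ and those of $T$ and satisfying $\tilde{\beta}_{|W_T|-j-1}(\K[W_T]) \geq 1$; the color condition lets one recover $T$ from $W_T$, so distinct $T$'s contribute distinct strictly positive terms and the bound follows. The set $W_T$ is $F_m$ together with a minimal collection of vertices connecting $T$ to the edge structure, chosen so that $\K[W_T]$ is a wedge of spheres concentrated in dimension $|W_T|-j-1$ — for $d=2$ one arranges $G[W_T]$ to be an induced union of $j$ pairwise disjoint edges, whose independence complex is a $(j-1)$-sphere — and the required nonvanishing is verified by a Mayer-Vietoris argument at the vertex $v$, using that $\K[W_T]-v$ and $\lk_{\K[W_T]}(v)$ are induced subcomplexes whose homology is controlled by the induction. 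I expect this last construction to be the main obstacle: one must exhibit, uniformly in $T$, a connecting set whose induced subcomplex has homology in the single degree $|W_T|-j-1$ and on which the Mayer-Vietoris connecting map is injective, and in higher degree the local picture around $A = F_m \setminus \{v\}$ is a sunflower with core $A$ rather than a single edge, which is where the combinatorial bookkeeping — and the check that nothing is overcounted across different $T$'s — is most delicate.
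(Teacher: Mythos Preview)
Your inductive framework is exactly the paper's: remove a leaf $v$ of color $c$, observe that the Hochster terms with $v\notin W$ reproduce $\beta_{j-1}(I(G-v))$, and manufacture $\binom{n_c-1}{j-1}$ distinct subsets $W\ni v$, distinguished by their color-$c$ content $T\cup\{v\}$, each contributing a nonzero homology class in degree $|W|-j-1$. Your extra remarks (the cone-point reduction to $W\supseteq F_m$, and the Betti-splitting reformulation) are correct but not used in the paper.

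The genuine gap is the construction of $W_T$, which you yourself flag as ``the main obstacle'' and do not carry out. Your heuristic (``$F_m$ together with a minimal collection of vertices connecting $T$ to the edge structure'') is too vague, and your concrete proposal for $d=2$ --- arranging $G[W_T]$ to be an induced matching of $j$ edges so that $\K[W_T]$ is a $(j-1)$-sphere --- is not always possible. Take $G$ a star with center $z$ (red) and leaves $\ell_1,\dots,\ell_k$ (blue), and $v=\ell_k$: every blue vertex has $z$ as its unique neighbor, so no induced subgraph covering $T\cup\{v\}$ can be a matching of $j\ge 2$ edges. (The correct $W_T$ here is $T\cup\{v,z\}$, whose independence complex is a simplex disjoint from a point, contributing $\tilde\beta_0=1$ in the right degree $|W_T|-j-1=0$; but this is not a $(j-1)$-sphere.) So the homological dimension $|W_T|-j-1$ genuinely varies with $T$, and there is no single combinatorial pattern for $G[W_T]$.

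What the paper actually does: starting from $\tilde U=V\setminus(B\setminus B')$ with $B'=T\cup\{v\}$, it runs an algorithm that alternates between (Step~1) deleting non-blue vertices $u$ that are \emph{not} needed --- meaning every blue vertex still lies in some edge avoiding $u$ --- and (Step~2) recording a non-blue vertex $u$ that \emph{is} needed (some blue vertex lies only in edges through $u$), replacing $\tilde G$ by $\lk_{\tilde G}(u)$; the recorded vertices $w_1,\dots,w_s$ together with $B'$ form $U'=W_T$. The invariant ``every blue vertex lies in an edge and every edge contains a blue vertex'' is maintained, and the Mayer-Vietoris argument is run not at $v$ but successively at $w_1,\dots,w_s$: each antistar $\K_i-w_{i+1}$ is a cone over a blue vertex (this is exactly what Step~1 arranged), so $\tilde\beta_{|W_T|-j-1}(\K[W_T])=\tilde\beta_{-1}(\K_s)=1$. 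This link-peeling algorithm is the missing idea in your proposal.
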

\proof We use induction on $n$.  In the case $n=d$, $G$ is a single edge and the result holds with $\beta_{j-1}(I) = 0$ for $j \geq 2$.  Consider $n>d$, and let $v$ be a leaf of $G$ (that is, a vertex contained in only one edge).  Every hypertree has a leaf.  Since $G-v$ is also a hypertree, the result holds on $I_{G-v}$ by the inductive hypothesis.

Suppose that $v$ is colored blue, and let $B \subset V$ be the set of blue vertices of $V$.  To prove the result, we show that for each $B' \subseteq B$, there exists $U' \subseteq V$ such that $U' \cap B = B'$, and $\tilde{H}_{|U'|-|B'|-1}(\K(G[U'])) \neq 0$.  The theorem then follows since, by taking all $B'$ with $|B'| = j$ and $v \in B'$, and using Hochster's formula, $\beta_{j-1}(I(G)) \geq \beta_{j-1}(I(G-v)) + {|B|-1 \choose j-1}$, which together with ${|B|-1 \choose j}+{|B|-1 \choose j-1} = {|B| \choose j}$ proves the result.

Fix $B'$ as above.  We describe an algorithm for constructing $U'$ with the desired property.  First initialize $\tilde{U} := V-(B-B')$, and $\tilde{G} := G[\tilde{U}]$.  Set $W := \emptyset$.  Initially, every blue vertex in $\tilde{G}$ is contained in an edge, and every edge contains a blue vertex.

\begin{itemize}

\item[Step 1:] Suppose there exists a non-blue $u \in \tilde{U}$ such that every blue vertex is contained in some face that does not contain $u$.  Then remove $u$ from $\tilde{U}$, and replace $\tilde{G}$ by $\tilde{G}-u$.  It is still the case that every edge contains a blue vertex, and every blue vertex is contained in an edge.  Repeat until there is no such vertex $u$.

\item[Step 2:] Choose $u$ to be a non-blue vertex.  There exists at least one blue $v \in \tilde{U}$ such that every edge that contains $v$ also contains $u$.  If $|W| = r$, then set $w_{r+1} := u$ and add it to $W$.  Remove $u$ from $\tilde{U}$ and replace $\tilde{G}$ by $\lk_{\tilde{G}}(u)$.  This process may create an edge that is a single blue vertex.  Also remove from $\tilde{U}$ and $\tilde{G}$ all vertices that are not contained in an edge after this process.  We argue below that no blue vertices are removed in this way.  Return to Step 1 if there are any non-blue vertices remaining in $\tilde{U}$.

\item[Step 3:] Take $U' = W \cup B'$.

\end{itemize}

We check that no blue vertex can be removed in Step 2.  Consider $\tilde{G}$ with the property that every blue vertex is contained in an edge, and every edge contains a blue vertex.  Consider the operation of taking the link of a non-blue vertex $u$.  Every edge in $\lk_{\tilde{G}}(u)$ contains a blue vertex.  Now consider a blue vertex $u_r$, which in $\tilde{G}$ is contained in the edge $F=\{u_1,\ldots,u_r\}$.  If $u \in F$, then $(F-\{u\})$ is an edge in $\lk_{\tilde{G}}(u)$ and $u_r$ is contained in an edge.  Otherwise, $F$ is removed from $\tilde{G}$ only if there exists an edge $F'$ with $u \in F'$ and $F'-\{u\} \subset F$.  In this case, since $F'-\{u\}$ contains a blue vertex, $u_r \in F'-\{u\}$.  Since $F'-\{u\}$ is an edge in $\lk_{\tilde{G}}(u)$, we conclude that $u_r$ is contained in an edge in $\lk_{\tilde{G}}(u)$.

Now we show that $\tilde{\beta}_{|U'|-|B'|-1}(\K(G[U'])) = 1$.  Let $\K_0 := \K(G[U'])$, and for $1 \leq i \leq |W|$, let $\K_i := \lk_{\K_{i-1}}(w_i)$.  Note that $\tilde{\beta}_{-1}(\K_{|W|})= 1$ since $\K_{|W|}$ has no non-empty faces.  The antistar $\K_{i}-w_{i+1}$ is a cone over a blue vertex by construction, and is therefore acyclic.  It follows from the portion of the Mayer-Vietoris sequence $$\tilde{H}_{|W|-i-1}(\K_{i}-w_{i+1}) \rightarrow \tilde{H}_{|W|-i-1}(\K_{i}) \rightarrow \tilde{H}_{|W|-i-2}(\K_{i+1}) \rightarrow \tilde{H}_{|W|-i-2}(\K_{i}-w_{i+1})$$ that $\tilde{\beta}_{|W|-i-1}(\K_{i}) = 1$.  We conclude that $\tilde{\beta}_{|U'|-|B'|-1}(\K(G[U'])) = 1$.
\endproof

\begin{example}
The lower bound in Theorem \ref{TreeLB} can be attained.
\end{example}
Label the vertex set of $G(V,E)$ by $\{v_1,\ldots,v_d,U_1,\ldots,U_d\}$, where for each $1 \leq i \leq d$, $U_i = \{u_{i,1},\ldots,u_{i,n_i-1}\}$.  Let $\{v_1,\ldots,v_d\} \in E$, and for all $1 \leq i \leq d$ and $1 \leq j \leq n_i-1$, let $\{v_1,\ldots,v_{i-1},v_{i+1},\ldots,v_d,u_{i,j}\} \in E$.  For each $1 \leq r \leq d$ and $W \subseteq V$, it can be verified that if $W \cap U_i = \emptyset$ for $i \neq r$, $v_i \in W$ for $i \neq r$, and $W$ is not simply $\{v_1\ldots,v_{r-1},v_{r+1},\ldots,v_d\}$, then $\tilde{\beta}_{d-2}(\K(G)[W]) = 1$ and $\tilde{\beta}_{p}(\K(G)[W]) = 0$ for $p \neq d-2$.  Furthermore, it can be verified that if $W$ does not satisfy these conditions, then $\K(G)[W]$ is acyclic.  It follows from Hochster's formula that $I(G)$ attains the lower bound of Theorem \ref{TreeLB}. \endproof

In the case of degree $2$ trees, we fully answer the question of equality.  For two vertices $u$ and $v$ of a connected graph $G$, let $\dist(u,v)$ be the number of edges in a shortest path joining $u$ and $v$.  The \textit{diameter} of $G$ is $\max_{u,v}(\dist(u,v))$.

\begin{theorem}
Let $G(V,E)$ be a colored tree with blue vertices $B$ and red vertices $R$, with $|B| = n_1$, and $|R| = n_2$.  Then $\beta_{j-1} = {n_1 \choose j} + {n_2 \choose j}$ for all $j \geq 2$ if and only if $G$ has diameter at most four.
\end{theorem}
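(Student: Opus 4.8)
The plan is to evaluate $\beta_{j-1}(I(G))$ through Hochster's formula and compare it with the lower bound of Theorem~\ref{TreeLB}. Write $\Gamma = \Gamma(I(G))$; since $G$ is a graph, $\Gamma[W]$ is the independence complex of the induced subforest $G[W]$, and by Hochster's formula each $W\subseteq V$ contributes $\widetilde\beta_{|W|-j-1}(\Gamma[W])$ to $\beta_{j-1}(I(G))$. Two preliminary facts about independence complexes of forests would be established first. First, $\widetilde H_*(\Gamma[W];\field)$ is the reduced homology of a point or of a single sphere: take $v$ to be the neighbour of an endpoint $w$ of a longest path in a component of $G[W]$, so that $w$ is isolated in $G[W]-v$ and the antistar $\Gamma[W]-v$ is a cone; then the Mayer--Vietoris step used in the proof of Theorem~\ref{TreeLB} gives $\widetilde H_p(\Gamma[W])\cong \widetilde H_{p-1}(\lk_{\Gamma[W]}(v))$, and $\lk_{\Gamma[W]}(v)$ is again the independence complex of a forest on at least two fewer vertices, so iterating proves the claim and identifies the sphere dimension as the number of steps minus one. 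Second, since each step deletes at least two vertices, that sphere dimension is at most $|W|/2-1$; combined with $|W|-j-1\geq 0$ this forces $j+1\leq |W|\leq 2j$ for every $W$ contributing to $\beta_{j-1}(I(G))$.

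For the direction ``$\mathrm{diam}(G)\le 4\Rightarrow$ equality for all $j$'', choose a vertex $c$ of eccentricity at most $2$, which exists precisely when $\mathrm{diam}(G)\le 4$, so that $V=\{c\}\cup N(c)\cup\bigcup_{u\in N(c)}L_u$ with $L_u$ the set of leaves at $u$; say $c$ is blue, whence $n_2=|N(c)|$ and $n_1=1+\sum_u|L_u|$. If $\widetilde H_*(\Gamma[W])\neq 0$ then $G[W]$ has no isolated vertex, and then each component is a star $K_{1,m}$ ($m\ge 1$) centred at some $u\in N(c)$, or, when it contains $c$, a ``spider'' at $c$ built from $c$, some of its neighbours, and some of their leaves (and in that case $G[W]$ is that one spider). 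Running the same folding step on these shapes shows such a spider is acyclic unless all its legs are single edges (then it is $K_{1,p}$, with the homology of $S^0$) or all of them carry a leaf (the homology of $S^{p-1}$, $p$ = number of legs); and since the reduced homology of a join is the tensor product shifted by one, $\widetilde H_*(\Gamma[W])\neq 0$ exactly when $W$ has one of the forms (A) $\{c\}\cup S$, $\emptyset\neq S\subseteq N(c)$; (B) $\{c\}\cup\{u:L_u\cap M\neq\emptyset\}\cup M$, $\emptyset\neq M\subseteq\bigcup_u L_u$; (C) $\{u:L_u\cap M\neq\emptyset\}\cup M$, $\emptyset\neq M\subseteq\bigcup_u L_u$, $c\notin W$. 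One checks these subsets are pairwise distinct, that (A) contributes to $\beta_{j-1}$ iff $|S|=j$, (B) iff $|M|=j-1$, and (C) iff $|M|=j$, so that their numbers sum to $\binom{n_2}{j}+\binom{n_1-1}{j-1}+\binom{n_1-1}{j}=\binom{n_1}{j}+\binom{n_2}{j}$. Since this is symmetric in $n_1,n_2$, the colour of $c$ is irrelevant.

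For the direction ``equality for all $j\Rightarrow\mathrm{diam}(G)\le 4$'' it suffices to use $j=2$. By the bound $|W|\le 2j$, only subsets with $|W|\in\{3,4\}$ contribute to $\beta_1(I(G))$, and inspecting the forests on three or four vertices shows the contributing ones are exactly those with $G[W]\cong P_3$ or $G[W]\cong 2K_2$; hence $\beta_1(I(G))=\#\{\text{induced }P_3\}+\#\{\text{induced }2K_2\}$. In a tree, induced $P_3$'s correspond bijectively to pairs of vertices at distance $2$, and induced $2K_2$'s correspond bijectively to pairs of vertices at distance $\ge 4$ (send the two disjoint edges to the two endpoints not lying on the path joining them; conversely a far pair $x,y$ determines the first and last edges of the $x$--$y$ path). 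Therefore $\beta_1(I(G))$ counts all vertex pairs at distance $2$ or $\geq 4$, whereas $\binom{n_1}{2}+\binom{n_2}{2}$ counts all pairs at even distance, so their difference is the number of pairs at odd distance $\geq 5$; this is positive exactly when $G$ has two vertices at distance $5$, i.e.\ exactly when $\mathrm{diam}(G)\ge 5$. Hence equality at $j=2$ forces $\mathrm{diam}(G)\le 4$.

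The part most likely to need care is the structural classification in the first direction: confirming that (A), (B), (C) exhaust the subsets $W$ with $\widetilde H_*(\Gamma[W])\neq 0$ and that the sphere dimension is right in each case. This rests on the folding lemma and on the observation that a component of $G[W]$ meeting $c$ is acyclic once it has both a leg that is a single edge and a leg carrying a leaf. Given the classification, the two bijections above and the binomial identity $\binom{n_2}{j}+\binom{n_1-1}{j-1}+\binom{n_1-1}{j}=\binom{n_1}{j}+\binom{n_2}{j}$ are routine.
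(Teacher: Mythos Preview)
Your argument is correct. The forward direction (diameter $\leq 4 \Rightarrow$ equality) is essentially the paper's proof: both fix a center $c$ of eccentricity $\leq 2$, run Hochster's formula, and classify the $W$ with $\widetilde H_*(\K[W])\neq 0$ into the same three families---your (A), (B), (C) are exactly the paper's Cases~2, 3, 1, and the homology computations (K\"unneth for the disjoint stars, one Mayer--Vietoris step for the spider through $c$) are the same. Your ``folding lemma'' packaging is a clean way to organise what the paper does ad hoc.

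The backward direction is where you genuinely diverge. The paper argues by locating an induced $P_6$, checking $\beta_1(I(P_6))=7>6$, and then propagating the strict inequality through the leaf-removal induction of Theorem~\ref{TreeLB}. You instead compute $\beta_1$ outright: restricting to $|W|\in\{3,4\}$ and identifying the contributing induced subgraphs as $P_3$ and $2K_2$, you get
\[
\beta_1(I(G))=\#\{\text{pairs at distance }2\}+\#\{\text{pairs at distance }\geq 4\},
\]
so that $\beta_1-\bigl(\binom{n_1}{2}+\binom{n_2}{2}\bigr)$ equals the number of pairs at odd distance $\geq 5$. This is a sharper statement than the paper's---it gives the exact excess, not just positivity---and avoids both the $P_6$ computation and the appeal to the earlier induction. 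The one step worth writing out carefully is the bijection between induced $2K_2$'s and pairs at distance $\geq 4$: it is correct (the ``outer'' endpoints are well-defined because in a tree the two endpoints of an edge are at distances differing by exactly one from any third vertex), but the phrase ``the path joining them'' should be made precise as the shortest path between the two edges.
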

\proof
First suppose $G$ has diameter greater than four.  Then $G$ has a subtree $G'$ that is a path on six vertices, of which three are red and three are blue.  One can check that $\beta_1(I(G')) = 7 > {3 \choose 2} + {3 \choose 2}$.  It follows by induction, as in the proof of Theorem \ref{TreeLB}, that $\beta_1(G) > {n_1 \choose 2} + {n_2 \choose 2}$.

Now suppose $G$ has diameter at most four.  There exists $v \in V$ such that for all $u \in V$, $\dist(u,v) \leq 2$.  Assume without loss of generality that $v$ is blue.  If $\dist(u,v)=1$, then $u$ is red, while if $\dist(u,v)=2$, then $u$ is blue.  Furthermore, all blue vertices except $v$ are leaves.  For each blue vertex $u \neq v$, let $p(u)$ be the unique neighbor of $u$.  Set $\K = \K(G)$.  We show that $\beta_{j-1} = {n_1 \choose j} + {n_2 \choose j}$ for all $j \geq 2$ by using Hochster's formula and considering induced subcomplexes on $W \subseteq V$ in several cases.

\begin{itemize}
 
\item[Case 1:] $v \not \in W$.  For all blue $u \in W$, $\K[W]$ is a cone over $u$ and therefore acyclic unless $p(u) \in W$.  Also, if $w \in W$ is a red vertex and $w \neq p(u)$ for any blue $u \in W$, then $\K[W]$ is a cone over $w$.  If $p(u) \in W$ for all blue $u \in W$, and all red vertices $w \in W$ satisfy $w = p(u)$ for some $u \in W$, then $\tilde{\beta}_{s-1}(\K[W]) = 1$ and $\tilde{\beta}_{t}(\K[W]) = 0$ for $t \neq s-1$, where $s = |W \cap R|$.  This is true for $s=1$ since in that case, $\K[W]$ is the disjoint union of a simplex and a vertex.  For $s>1$, $\K[W]$ is the join of $s$ such complexes, and the K\"{u}nneth formula applies.  Such subsets $W$ can be indexed by subsets of $B$ not containing $v$.

\item[Case 2:] $W \cap B = \{v\}$ and $W \cap R \neq \emptyset$.  Then $\tilde{\beta}_0(\K[W]) = 1$ and $\tilde{\beta}_j(\K[W]) = 0$ for $j \neq 0$, since in that case, $\K[W]$ is the disjoint union of a vertex and a simplex.  Such subsets $W$ can be indexed by nonempty subsets of $R$.

\item[Case 3:] $\{v\} \subsetneq W \cap B$.  Let $u \in W \cap B$ with $u \neq v$.  Then $\K[W]$ is a cone with apex $u$ unless $p(u) \in W$, so assume $p(u) \in W$.  Consider $\lk_\K(p(u))$.  By the reasoning of Case 1, $\lk_\K(p(u))[W]$ is acyclic unless for every $w \in R \cap W$, $w = p(z)$ for some $z \in W \cap B$, and also for all $z \in W \cap B$, $p(z) \in W$.  If that condition is also satisfied, then $\tilde{\beta}_{s-2}(\lk_{\K}(p(u))[W]) = 1$ and $\tilde{\beta}_{t}(\lk_{\K}(p(u))[W]) = 0$ for $t \neq s-2$, where $s = |W \cap R|$, by the reasoning of Case 1.  It then follows from the portion of the Mayer-Vietoris sequence $$\tilde{H}_t(\K[W]-p(u)) \rightarrow \tilde{H}_t(\K[W]) \rightarrow \tilde{H}_{t-1}(\lk_{\K}(p(u))[W]) \rightarrow \tilde{H}_{t-1}(\K[W]-p(u))$$ that $\tilde{\beta}_{t}(\K[W]) = \tilde{\beta}_{t-1}(\lk_\K(p(u))[W])$ for all $t$.  Such subsets $W$ with $\K[W]$ not acyclic can be indexed by subsets of $B$ containing $v$.

\end{itemize}

It follows from Hochster's formula and the three cases above that $\beta_{j-1} = {n_1 \choose j} + {n_2 \choose j}$ for all $j \geq 2$.
\endproof

\begin{question}
For degree $d$ hypertree ideals, when is $\beta_{j-1}(I) = \sum_{i=1}^d {n_i \choose j}$ for all $j \geq 2$?
\end{question}

Theorem \ref{TreeLB} can be used to establish a lower bound on the Betti numbers of forest ideals.  If $r$ is an integer, we say that the sequence of integers $(r_1,\ldots,r_d)$ is a \textit{nearly even d-partition} of $r$ if for all $1 \leq i < j \leq d$, $|r_i-r_j| \leq 1$.

\begin{theorem}
\label{ForestLowerBound}
Let $T$ be a degree $d$ hyperforest with $t$ edges, and let $(n_1,\ldots,n_d)$ be a nearly even $d$-partition of $t+d-1$.  Let $I$ be the edge ideal of $T$.  Then for $j \geq 2$, $$\beta_{j-1}(I) \geq \sum_{i=1}^d {n_i \choose j}.$$
\end{theorem}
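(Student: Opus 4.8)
The plan is to prove, by induction on $t$, the statement for all hyperforests of degree \emph{at most} $d$ (this is more convenient for the induction and immediately implies the theorem). The base case $t=1$ is clear, since then $t+d-1=d$ and both sides vanish for $j\ge 2$. Suppose $T$ has $t\ge 2$ edges, fix a hyperforest order $F_1,\dots,F_t$, and let $v$ be a vertex of $F_t\setminus(F_1\cup\cdots\cup F_{t-1})$ (nonempty by the defining property). Then $v$ is a leaf, so $T-v$ is a hyperforest of degree $\le d$ with $t-1$ edges, and any isolated vertices it creates are harmless, contributing nothing in Hochster's formula. Splitting the Hochster sum for $I(T)$ according to whether $v\in W$ and using $\K(T)[W]=\K(T-v)[W]$ when $v\notin W$ gives
\[
\beta_{j-1}(I(T))=\beta_{j-1}(I(T-v))+\sum_{v\in W\subseteq V}\tilde\beta_{|W|-j-1}\bigl(\K(T)[W]\bigr),
\]
a sum of nonnegative terms. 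If $(m_1,\dots,m_d)$ is the nearly even $d$-partition of $t+d-2$, the inductive hypothesis gives $\beta_{j-1}(I(T-v))\ge\sum_i\binom{m_i}{j}$; moreover $(n_1,\dots,n_d)$ is obtained from $(m_i)$ by adding $1$ to a smallest part $m=\lfloor(t+d-2)/d\rfloor$, so $\sum_i\binom{n_i}{j}=\sum_i\binom{m_i}{j}+\binom{m}{j-1}$. Hence it suffices to show the displayed sum is at least $\binom{m}{j-1}$.

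To bound that sum I would imitate the algorithm in the proof of Theorem \ref{TreeLB}. Choose a $d$-coloring of $T$ whose color class $B$ containing $v$ has $|B|\ge m+1=\lceil(t+d-1)/d\rceil$. For each $(j-1)$-element subset $C$ of $B\setminus\{v\}$, run that algorithm (repeatedly delete a non-$B$ vertex off which every $B$-vertex can be separated, and otherwise pass to a link) starting from $T$ restricted to $V\setminus\bigl(B\setminus(C\cup\{v\})\bigr)$, obtaining a set $U'\ni v$ with $U'\cap B=C\cup\{v\}$; the chain of Mayer--Vietoris sequences for the successive links, whose antistars are cones over $B$-colored vertices, then forces $\tilde\beta_{|U'|-j-1}(\K(T)[U'])=1$. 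Distinct $C$ give distinct $U'$, so the displayed sum is at least $\binom{|B|-1}{j-1}\ge\binom{m}{j-1}$, as required.

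The hard part will be making the previous paragraph rigorous, since the structural facts behind Theorem \ref{TreeLB} were established for hypertrees. Two points require work. First, a hyperforest has no canonical coloring, so one must secure a $d$-coloring in which $v$'s class has at least $m+1$ vertices: if $T$ uses at least $t+d-1$ vertices this follows from pigeonhole together with a short recoloring argument (a size-$d$ edge carries all $d$ colors, which gives the needed flexibility), but the tight situation also includes hyperforests on fewer vertices—ordinary trees regarded in degree $d$, for instance—where one instead builds the coloring around a maximum independent set. Second, $\lk_T(v)$ need not be a hyperforest, so neither the ``pass to a link'' step nor the claim that no $B$-vertex is ever deleted can be quoted verbatim; both must be re-established here, with careful bookkeeping of which connecting maps in the Mayer--Vietoris sequences vanish. (If $T$ is disconnected there is a cheaper route: for $T=T_1\sqcup T_2$ the K\"unneth formula gives $\beta_{j-1}(I(T))=\sum_{a+b=j}\beta_{a-1}(I(T_1))\,\beta_{b-1}(I(T_2))$ with $\beta_{-1}:=1$, and retaining the cross terms involving $\beta_0(I(T_k))=t_k$, together with the inductive bounds for $T_1$ and $T_2$, already yields $\sum_i\binom{n_i}{j}$ after a short computation.) The remaining ingredients—the base case and the arithmetic of nearly even partitions—are routine.
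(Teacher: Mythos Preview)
Your inductive skeleton is right, but you have made the problem harder than necessary by trying to force the nearly even partition at each step---this is precisely what generates the coloring requirement you flag as the first ``hard part.'' The paper sidesteps it with a convexity observation. Fix a hyperforest ordering, pick $v_k\in F_k\setminus(F_1\cup\cdots\cup F_{k-1})$ for $k\ge 2$, and take \emph{any} $d$-coloring $\kappa$; setting $n'_i=1+|\{k\ge 2:\kappa(v_k)=i\}|$ produces \emph{some} partition of $t+d-1$, with no control over which one. Running the induction of Theorem~\ref{TreeLB}, peeling off $v_t,v_{t-1},\ldots,v_2$ in turn, gives $\beta_{j-1}(I)\ge\sum_i\binom{n'_i}{j}$. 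The finish is then pure arithmetic: since $x\mapsto\binom{x}{j}$ is convex, among all $d$-part compositions of $t+d-1$ the sum $\sum_i\binom{n'_i}{j}$ is minimized by the nearly even one, so $\sum_i\binom{n'_i}{j}\ge\sum_i\binom{n_i}{j}$ automatically. Thus there is no need to engineer a coloring in which the removed leaf's class has a prescribed size, and your recoloring and maximum-independent-set maneuvers---as well as the K\"unneth detour for disconnected $T$---become unnecessary.

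Your second concern, whether the link-and-antistar algorithm of Theorem~\ref{TreeLB} extends when edges can have size less than $d$ and hence miss the ``blue'' color, is legitimate but is shared by the paper, which simply writes ``by the same argument.'' The Step~2 verification there uses only the running invariant that every edge contains a blue vertex, not any global hypertree structure; what needs attention is the initialization of that invariant, and this is a local patch rather than a structural obstacle.
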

\proof
Enumerate the edges of $T$ by $F_1,\ldots,F_{t}$ such that for $2 \leq i \leq t$, $|F_i-\{F_1 \cup \ldots \cup F_{i-1}\}| \geq 1$.  For $2 \leq k \leq t$, choose $v_k \in F_k-(F_1 \cup \ldots \cup F_{k-1})$.  Let $\kappa$ be a coloring of $T$, and set $n'_i = 1+|\{k: 2 \leq k \leq t, \kappa(v_k)=i\}|$.  It then follows by the same argument of the proof of Theorem \ref{TreeLB} that for $j \geq 2$, $\beta_{j-1}(I) \geq \sum_{i=1}^d {n'_i \choose j}.$

To complete the proof, observe that among all partitions $\{n'_1,\ldots,n'_d\}$ of $t+d-1$, the sum $\sum_{i=1}^d {n'_i \choose j}$ is minimized when $\{n'_1,\ldots,n'_d\}$ is nearly even.
\endproof

\section{Upper Bounds on Graded Betti Numbers}
\label{UpperBounds}
In this section we establish some upper bounds on the graded Betti numbers of squarefree monomial ideals.

A simple observation is that for a squarefree monomial ideal $I$ with $t$ generators, $\beta_{i-1,j}(I) \leq {t \choose i}$.  This follows from the Taylor resolution.  For pure degree $d$ ideals and $j = d+(i-1)r$, $\beta_{i-1,j}(I) = {t \choose i}$ if the variables are labeled $x_1, \ldots, x_{d-r}, x_{a,1}, \ldots, x_{a,r}$ for $1 \leq a \leq t$ and $I$ consists of $t$ generators of the form $\mu_a = x_1\ldots x_{d-r} x_{a,1}\ldots x_{a,r}$.

\begin{theorem}
\label{beta35}
Let $I$ be a pure degree $d$ squarefree monomial ideal with $t$ generators, and let $(t_1,t_2)$ be a nearly even $2$-partition of $t$.  Then $\beta_{2,3d-1}(I) \leq {t \choose 3} - {t_1 \choose 3} - {t_2 \choose 3}$.
\end{theorem}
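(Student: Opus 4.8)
The plan is to translate the statement into a question about simplicial homology via Hochster's formula and then bound the relevant Betti number by counting triples of generators whose least common multiple has degree exactly $3d-1$. By Hochster's formula, $\beta_{2,3d-1}(I) = \sum_{|W|=3d-1}\tilde{\beta}_{3d-1-2-2}(\K(I)[W]) = \sum_{|W|=3d-1}\tilde{\beta}_{3d-5}(\K(I)[W])$. Since $I$ is pure of degree $d$, every minimal nonface of $\K(I)$ has $d$ vertices; a subset $W$ with $|W| = 3d-1$ can support homology in degree $3d-5 = |W| - d - 3$ only if it is ``covered'' in a suitable sense by exactly three edges (generators) whose union is $W$ and whose pairwise and triple-wise intersections are forced to be small. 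So the first step is to show $\beta_{2,3d-1}(I) \leq |\{W \subset [t] : |W| = 3, \deg \LCM_{k \in W}\mu_k = 3d-1\}| = \beta^T_{2,3d-1}(I)$, i.e.\ that the Taylor resolution already gives the right count in this graded degree (this is the content of the ``simple observation'' preceding the theorem, specialized to $i=3$, $j=3d-1$); more precisely one uses $\beta_{i-1,j}(I) \le \beta^T_{i-1,j}(I)$ together with the formula \eqref{TaylorBetti}.

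Next I would analyze exactly which triples $\{\mu_a,\mu_b,\mu_c\}$ contribute. Writing $F_a, F_b, F_c$ for the corresponding $d$-element edges, $\deg\LCM(\mu_a,\mu_b,\mu_c) = |F_a \cup F_b \cup F_c|$. Since each $|F_x| = d$, inclusion–exclusion gives $|F_a\cup F_b\cup F_c| = 3d - |F_a\cap F_b| - |F_a\cap F_c| - |F_b\cap F_c| + |F_a\cap F_b\cap F_c|$. For this to equal $3d-1$ we need $|F_a\cap F_b| + |F_a\cap F_c| + |F_b\cap F_c| - |F_a\cap F_b\cap F_c| = 1$. Since the triple intersection is contained in each pairwise intersection, the only way to get the value $1$ is: exactly one pairwise intersection is a single vertex and the other two are empty, and the triple intersection is empty. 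Hence a triple contributes iff, after relabeling, $F_a\cap F_b = \{x\}$ for one vertex $x$, $F_a\cap F_c = F_b\cap F_c = \emptyset$. Define a graph $H$ on vertex set $[t]$ with an edge $\{a,b\}$ whenever $|F_a\cap F_b| = 1$ (note $|F_a\cap F_b|$ cannot be $\ge 2$ and also cannot make the triple count work unless it is exactly $1$ in these configurations). Then the contributing triples are exactly the triples $\{a,b,c\}$ spanning exactly one edge of $H$ — that is, $\beta_{2,3d-1}(I) \le \#\{$triples of $[t]$ inducing exactly one edge of $H\}$.

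It remains a purely combinatorial extremal problem: over all graphs $H$ on $t$ vertices, maximize the number of $3$-subsets inducing exactly one edge, and show this maximum is ${t\choose 3} - {t_1\choose 3} - {t_2\choose 3}$ with $(t_1,t_2)$ the nearly even $2$-partition. I expect this to be the main obstacle. The count of triples inducing exactly one edge of $H$ equals ${t\choose 3}$ minus the number inducing zero edges minus the number inducing two or more edges; discarding the last (nonnegative) term, it is at most ${t\choose 3} - (\text{number of independent triples of }H)$. To finish one wants the complement $\bar H$ to have as few triangles as possible while $H$ has no triples with two edges — the extremal configuration being $H$ a disjoint union of two cliques of sizes $t_1, t_2$ (so $\bar H$ is the complete bipartite graph $K_{t_1,t_2}$, which is triangle-free, and $H$ itself has no induced-single-edge obstruction because every triple inside a clique induces three edges and every triple across induces one edge — wait, one must check the ``exactly one edge'' count is achieved, which happens since a triple with two vertices in one part and one in the other induces exactly one edge). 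So I would argue: any $H$ with a triple inducing exactly two edges can be modified without decreasing the target count, reducing to $H$ whose every triple induces $0$, $1$, or $3$ edges, i.e.\ $H$ is a disjoint union of cliques; then among disjoint unions of cliques on $t$ vertices the number of cross-pair triples $\sum_{i<j} |C_i||C_j|(|C_i|+|C_j|-2)/\dots$ — cleaner: triples inducing exactly one edge $= {t\choose 3} - \sum_i {|C_i|\choose 3}$ minus (triples hitting three distinct cliques) $-$ (triples with two in one clique, one in another, but those induce exactly one edge so they count) — so it is ${t\choose 3} - \sum_i {|C_i|\choose 3} - (\text{3-transversal triples})$, maximized by taking two cliques (no 3-transversal triples possible) with sizes as equal as possible, since $\sum_i{|C_i|\choose 3}$ with two parts is minimized at the nearly even split and adding more parts only creates transversal triples and raises the subtracted sum. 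This yields the bound ${t\choose 3} - {t_1\choose 3} - {t_2\choose 3}$, completing the proof.
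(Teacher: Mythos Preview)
Your reduction to the Taylor bound and the identification of the contributing triples is correct and essentially matches the paper: one lands on the purely combinatorial problem of showing that for any graph on $t$ vertices the number $e_1$ of $3$-subsets inducing exactly one edge is at most ${t\choose 3}-{t_1\choose 3}-{t_2\choose 3}$. (A small slip: in your graph $H$, a non-edge only means $|F_a\cap F_c|\neq 1$, so ``contributing triples $=$ single-edge triples'' is false; but the inequality $\le$ you actually use is fine. The paper instead puts an edge whenever $F_i\cap F_j\neq\emptyset$, which leads to the same extremal question.)

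The gap is in the combinatorial step. Two problems:
\begin{itemize}
\item Your reduction to disjoint unions of cliques (``any $H$ with a triple inducing exactly two edges can be modified without decreasing the target count'') is the heart of the matter and is merely asserted. One can try edge additions/deletions, but neither operation monotonically increases $e_1$ in general, so an actual argument is needed.
\item Even granting that reduction, your optimization over clique partitions is argued incorrectly: you write that ``adding more parts \ldots\ raises the subtracted sum'' $\sum_i{|C_i|\choose 3}$, but splitting a part strictly \emph{decreases} this sum. The correct comparison is that merging the two smallest parts $C_a,C_b$ changes $e_1$ by $|C_a||C_b|\bigl[(t-|C_a|-|C_b|)-\tfrac{1}{2}(|C_a|+|C_b|-2)\bigr]$, which one must show can be made nonnegative until only two parts remain; then balance the two parts.
\end{itemize}

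The paper bypasses the clique reduction entirely. It bounds $e_1 \le e_1+e_2 = \tfrac{1}{2}\sum_v (\deg v)(t-1-\deg v) \le \tfrac{1}{2}\,t\,a(t-1-a)$ by concavity (with $a$ the average degree), which already suffices when $a\le \lfloor t/2\rfloor-1$; otherwise some vertex has degree at least $\lfloor t/2\rfloor$, and removing it gives $e_1(H)-e_1(H-v)\le {t-1\choose 2}-{\lfloor t/2\rfloor\choose 2}$, after which induction on $t$ finishes. This avoids any structural classification of extremal graphs.
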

\begin{proof} We show, using the Taylor resolution, that actually $$\beta^T_{2,3d-1}(I) \leq {t \choose 3} - {t_1 \choose 3} - {t_2 \choose 3}.$$  Suppose that $I$ is the edge ideal of a hypergraph $G(V,E)$ with edges $\{F_1,\ldots,F_t\}$.  Construct a graph $G'$ with vertex set $\{v_1,\ldots,v_t\}$ so that $(v_i,v_j)$ is an edge in $G'$ if and only if $F_i \cap F_j \neq \emptyset$.

We calculate $$\beta^T_{2,3d-1}(I) = \frac{1}{2}|\{(F_i,F_j,F_k) \in E^3: F_i \cap F_j = \emptyset, F_i \cap F_k = \emptyset, |F_j \cap F_k| = 1\}| \leq$$ $$\frac{1}{2}|\{(v_i,v_j,v_k) \in V(G')^3: (v_i,v_j) \not\in E(G'), (v_i,v_k) \not\in E(G'), (v_j,v_k) \in E(G')\}|.$$

Label the latter expression by $P(G')$; $P(G')$ is the number of induced copies of a single edge on $3$ vertices in $G'$.  With $\deg v$ denoting the degree of a vertex $v$ in $G'$, and $a$ the average of $\deg v$ over all vertices $v$ in $G'$, we have $$\beta^T_{2,3d-1}(I) \leq \frac{1}{2}|\{(v_i,v_j,v_k) \in V(G')^3: (v_i,v_k) \not\in E(G'), (v_j,v_k) \in E(G')\}| = $$ $$\frac{1}{2}\sum_{v \in V(G')} (\deg v)(t-1-\deg v) \leq \frac{1}{2}ta(t-1-a).$$

We apply induction on $t$ with the base cases $P(G') = 0$ for $t=1,2$ clear.  First consider the case $t=2k$ is even.  If $a \leq k-1$, then $P(G') \leq {t \choose 3} - {t_1 \choose 3} - {t_2 \choose 3}$.  Otherwise, if $a > k-1$, then there exists a vertex $v \in V(G')$ with $\deg v \geq k$.  By the inductive hypothesis, $P(G'-v) \leq {t-1 \choose 3} - {k \choose 3} - {k-1 \choose 3}$.  Also $P(G') - P(G'-v) \leq {t-1 \choose 2} - {k \choose 2}$ since the induced subgraph on $v$ and vertices $u_1,u_2$ is not a single edge if $u_1$ and $u_2$ are both neighbors of $v$.  Hence the desired inequality holds on $P(G')$.

Now consider $t = 2k+1$.  If $a \leq k-1$, then $P(G') < {t \choose 3} - {t_1 \choose 3} - {t_2 \choose 3}$.  If $a > k-1$, then there exists $v \in V(G')$ with $\deg v \geq k$.  By the inductive hypothesis, $P(G'-v) \leq {t-1 \choose 3} - {k \choose 3} - {k \choose 3}$.  Also $P(G') - P(G'-v) \leq {t-1 \choose 2} - {k \choose 2}$.  Hence the desired inequality holds on $P(G')$.
\end{proof}

The upper bound of Theorem \ref{beta35} is attained by the degree $2$ hypergraph with vertices \newline $u_1,u_2,v_1, \ldots, v_{t_1}, w_1,\ldots,w_{t_2}$ and edges $$\{(u_1,v_1)\ldots(u_1,v_{t_1}),(u_2,w_1)\ldots(u_2,w_{t_2})\}.$$

\noindent What about other Betti numbers of the form $\beta_{2,\bullet}(I)$?  A higher bound is necessary for $\beta_{2,6}$ in the case $d=3$.  There exists a degree $3$ squarefree monomial ideal on $6$ generators with $\beta_{2,6} = {6 \choose 3}$, namely $$I=\{x_1x_2x_4, x_1x_2x_5, x_1x_3x_6, x_1x_3x_7, x_2x_3x_8, x_2x_3x_9\}.$$ It can also be seen by exhaustive search that $I$ is the only ideal, up to isomorphism, satisfying these properties, and that there does not exist a degree $3$ ideal on $7$ generators satisfying $\beta_{2,6} = {7 \choose 3}$ or even $\beta^T_{2,6} = {7 \choose 3}$; that is, there is no degree $3$ hypergraph with $7$ edges such that all sets of $3$ edges have a union with $6$ vertices.  It follows that, for a degree $3$ squarefree monomial ideal $I$ on $t$ generators, $\beta_{2,6}(I) \leq {t \choose 3} - T(t,7,3)$.  Here $T(t,7,3)$ is the Tur\'{a}n number, defined to be the minimum cardinality of a collection $W$ of $3$-subsets of $[t]$ such that every $7$-subset of $[t]$ contains an element of $W$.  See the reference paper  \cite{Turan} for more on Tur\'{a}n numbers.

In analogy with Theorem \ref{beta35}, we make the following conjecture.

\begin{conjecture}
\label{b36}
Let $I$ be a degree $3$ squarefree monomial ideal with $t$ generators, and let $(t_1,t_2,t_3)$ be a nearly even $3$-partition of $t$.  Then $\beta_{2,6}(I) \leq {t \choose 3}-{t_1 \choose 3}-{t_2 \choose 3}-{t_3 \choose 3}$.
\end{conjecture}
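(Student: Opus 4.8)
The plan is to follow the template of the proof of Theorem~\ref{beta35}. As there, it suffices to bound the Taylor number $\beta^T_{2,6}(I)$, and I would first restrict to the pure case, $I=I(G)$ for a $3$-uniform hypergraph $G(V,E)$ (for a non-pure degree-$3$ ideal a separate normalization is needed, since complete-intersection examples built from generators of degree $1$ or $2$ behave differently). For pure $I$, equation~(\ref{TaylorBetti}) gives
\[
\beta^T_{2,6}(I)=\bigl|\{\,\{F_i,F_j,F_k\}\subseteq E:\ |F_i\cup F_j\cup F_k|=6\,\}\bigr|,
\]
so the problem becomes purely combinatorial: calling such a triple \emph{good}, bound the number of good triples by $\binom{t}{3}-\binom{t_1}{3}-\binom{t_2}{3}-\binom{t_3}{3}$.

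The first step is a classification. By inclusion--exclusion a triple is good iff $|F_i\cap F_j|+|F_i\cap F_k|+|F_j\cap F_k|-|F_i\cap F_j\cap F_k|=3$, and since distinct edges of a $3$-uniform hypergraph meet in at most two vertices, the only solutions are the \emph{loose triangle} (pairwise intersections $1,1,1$, empty common intersection), the \emph{near-pencil} ($2,1,0$, empty common intersection) and the \emph{book} ($2,1,1$, common intersection of size $1$); in particular every good triple contains a pair meeting in exactly one vertex. Conversely, a short case check shows that the triples in which every pair meets in $0$ or $2$ vertices are exactly the union of: the triples lying inside a common \emph{bundle} $B_\sigma:=\{F\in E:\sigma\subseteq F\}$, $\sigma\in\binom{V}{2}$; the pairwise-disjoint triples; and the triples having exactly one non-disjoint pair, that pair lying in some $B_\sigma$ and the third edge disjoint from it --- and all of these have union of size $\neq 6$. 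The common intersection of a bundle triple determines $\sigma$, so the bundle triples number $\sum_\sigma\binom{|B_\sigma|}{3}$ with no overcounting, and we already get
\[
\beta^T_{2,6}(I)\ \le\ \binom{t}{3}-\sum_{\sigma\in\binom{V}{2}}\binom{|B_\sigma|}{3}.
\]
This is the three-stars part of the bound: in the extremal example exactly three bundles --- the $\sigma$'s contained in two of the centres --- have sizes $t_1,t_2,t_3$, while all others have size at most $1$.

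What remains is to show that $\sum_\sigma\binom{|B_\sigma|}{3}$, together with $D$ (the number of pairwise-disjoint triples, plus the number of triples with one non-disjoint pair lying in a bundle and the third edge disjoint from it) and $E$ (the number of bad triples that nevertheless contain a size-$1$ pair: pencils through a single vertex, among a few degenerate shapes), is always at least $\binom{t_1}{3}+\binom{t_2}{3}+\binom{t_3}{3}$. This is a supersaturation statement: if all bundles are small then $G$ must contain many disjoint triples or many pencils, and the total never drops below the balanced three-stars value. I would attempt this by induction on $t$, deleting a vertex of $V$ in a largest bundle or an edge $F$ of $G$ meeting extremally many other edges, and bounding the number of good triples destroyed against $\binom{t}{3}-\binom{t-1}{3}-\bigl(\sum_i\binom{t_i}{3}-\sum_i\binom{t_i'}{3}\bigr)$, which equals $\binom{t-1}{2}-\binom{m-1}{2}$ when $t=3m$ (with the obvious change otherwise), $(t_1',t_2',t_3')$ being the nearly even $3$-partition of $t-1$. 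When the intersection graph $G'$ of Theorem~\ref{beta35} is nearly regular, the convexity bound $\tfrac12\sum_v(\deg_{G'}v)(t-1-\deg_{G'}v)\le\tfrac12\,t\,a\,(t-1-a)$ used there should carry most of the load, now measured against the balanced $3$-partition rather than the $2$-partition.

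The step I expect to be the real obstacle --- and presumably the reason this is still a conjecture --- is exactly this last one. In Theorem~\ref{beta35} the good triples for $\beta_{2,3d-1}$ are controlled by a single induced subgraph of $G'$ (an edge together with a non-adjacent vertex), so passing to an intersection-graph count costs nothing; here the good triples occur in three genuinely different shapes whose contribution depends on the \emph{sizes} of the intersections, not merely on which pairs meet, and any relaxation to a pure intersection-graph quantity (such as the number of connected triples of $G'$) is far too lossy. The induction must therefore track the simultaneous destruction of loose triangles, near-pencils and books against a target that forces a three-part extremal configuration, and calibrating the convexity and deletion bookkeeping so that it lands exactly on $\binom{t_1}{3}+\binom{t_2}{3}+\binom{t_3}{3}$ is where the difficulty concentrates; I would also expect to need, as a base-case ingredient, the Tur\'{a}n-type fact recorded above that no $3$-uniform hypergraph with $7$ edges has all $3$-subsets of union size $6$.
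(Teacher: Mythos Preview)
This statement is Conjecture~\ref{b36} in the paper and is left open there; the paper supplies no proof, only the remark that the bound, if true, is attained by the three-bundle example. There is therefore nothing to compare your argument against, and what you have written is an attack plan on an open problem rather than a proof.

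The parts you actually carry out are sound. The Taylor formula~(\ref{TaylorBetti}) reduces the pure case to counting triples of $3$-edges with union of size~$6$; your inclusion--exclusion classification of such triples into loose triangles, near-pencils, and books is correct and exhaustive; and the bundle inequality
\[
\beta^T_{2,6}(I)\ \le\ \binom{t}{3}-\sum_{\sigma\in\binom{V}{2}}\binom{|B_\sigma|}{3}
\]
is valid and sharp on the conjectured extremal configuration.

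The gap, however, is exactly where you place it, and it is not closed. The assertion that the remaining non-good triples --- pairwise-disjoint triples, pencils through a common vertex, and the mixed types you label $D$ and $E$ --- together with the bundle term always total at least $\binom{t_1}{3}+\binom{t_2}{3}+\binom{t_3}{3}$ is the entire conjecture restated on the combinatorial side. Neither the convexity device from Theorem~\ref{beta35} (which records only whether pairs of edges meet, not the size of the intersection) nor your proposed edge-deletion induction is shown to control the three shapes simultaneously against a three-part target; your final paragraph acknowledges as much. One further caution: your plan bounds $\beta^T_{2,6}$ rather than $\beta_{2,6}$ directly. It is not established that the Taylor count itself satisfies the conjectured inequality for all $t$, so this stronger statement is an additional assumption of your approach, and a failure of the Taylor route would not by itself refute the conjecture.
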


\noindent If Conjecture \ref{b36} is true, then the bound is tight and is attained by the ideal on variables $x_1,x_2,x_3, y_1,\ldots,y_{t_1}, z_1,\ldots,z_{t_2}, w_1,\ldots,w_{t_3}$ given by $$I = (x_1x_2y_1,\ldots,x_1x_2y_{t_1},x_1x_3z_1,\ldots,x_1x_3z_{t_2},x_2x_3w_1,\ldots,x_2x_3w_{t_3}).$$

\section*{Acknowledgments}
The author has been supported while working on this project by a graduate fellowship from VIGRE NSF Grant DMS-0354131.

\end{document}